\theoremstyle{plain}
\newtheorem{lemma}{Lemma}[section]
\newtheorem{theorem}[lemma]{Theorem}
\newtheorem{corollary}[lemma]{Corollary}
\theoremstyle{definition}
\newtheorem{definition}[lemma]{Definition}
\newtheorem{remark}[lemma]{Remark}
\numberwithin{equation}{section}
\newtheorem*{hyp*}{Hypothesis}
\newtheorem*{theorem*}{Theorem}
\newtheorem*{lemma*}{Lemma}
\newtheorem*{proposition*}{Proposition}
\newtheorem*{defn*}{Definition}
\newtheorem*{corollary*}{Corollary}
\newtheorem*{remark*}{Remark}
\newtheorem*{example*}{Example}
\newtheorem*{whythm*}{Whyburn's Theorem}
\newcommand{\diam}{\text{diam}\,}
\newcommand{\dist}{\text{{\rm dist}}\,}
\newcommand{\ol}{\overline}
\newcommand{\bs}{\boldsymbol}
\renewcommand{\forall}{\text{ for all }}
\def\NN{\mathbb N}
\newcommand{\x}{\times}
\renewcommand{\leq}{\leqslant}
\renewcommand{\geq}{\geqslant}
\newcommand{\e}{\epsilon}
\newcommand{\sF}{\mathscr F}
\renewcommand{\a}{\alpha}
\newcommand{\mM}{\mathcal{M}}
\renewcommand{\d}{\delta}
\newcommand{\sN}{\mathscr N}
\newcommand{\sR}{\mathscr{R}}
\newcommand{\sP}{\mathscr{P}}
\newcommand{\sQ}{\mathscr{Q}}
\newcommand{\sO}{\mathscr{O}}
\renewcommand{\b}{\beta}
\newcommand{\g}{\gamma}
\newcommand{\mO}{\mathcal{O}}
\newcommand{\mF}{\mathcal{F}}
\renewcommand{\iff}{\text{ if and only if }}
\newcommand{\ed}{\end{document}}
\title{Peano  Quotients of Metric Continua}
\author{J. F. Toland\footnote{\noindent Department of Mathematical Sciences, University of Bath, Claverton Down, BA2 7AY, U.K. masjft@bath.ac.uk}}
\date{}
\begin{document}
\maketitle

\begin{abstract} \noindent
For any compact, connected metric space $(M,d)$   the set of points where $M$ is not weakly locally connected is shown to define  a partition $\sP$ of $M$ for which  the corresponding quotient metric space  $(\sQ, \nabla_\sQ)$  is a Peano continuum with $\sQ = \sP$.
\end{abstract}


\section{Introduction}

A metric space $(M,d)$ which is not empty, compact and connected is called a continuum and a continuum is a  Peano continuum if it is  locally connected. Hence, since a metric space  is locally connected if and only if it is weakly locally connected (Definition \ref{loccon}, see \cite[Thm.\,3\,-11]{hy} or  \cite[Lem.\,4.23]{bt})  a continuum  is a Peano continuum  if and only if the set  $\sN(M)$  of points where $M$ is not weakly locally connected is empty. 

\begin{subequations}\label{F&O} More precisely, for   an arbitrary  continuum $M$ let 

\begin{equation}\mF = \ol{\sN(M)}, ~~~\mO = M \setminus \mF,\end{equation}
and, with $\sF$ denoting the set of components of $\mF$ and  $\sO$  the set of singletons $\big\{\{x\}: x \in \mO\big\}$, let 
 \begin{equation}\text{$\sP = \sF \cup \sO$, which is a partition of $M$ and $M= \mF \cup \mO$}.\end{equation} 
Then if $(\sQ, \nabla_\sQ)$ denotes the quotient metric space \cite[Defn.\,3.1.12]{bby}  of $(M,d)$  generated by the partition  $\sP$ it is shown that
\end{subequations}
\begin{itemize}
\item[(i)]  $\sQ = \sP$ and $(\sQ, \nabla_\sQ)$  is a Peano continuum, Theorem \ref{P=Q} and Theorem\ref{Peanoresult};
\item[(ii)] $\sF$ is totally disconnected in $(\sQ, \nabla_\sQ)$, Theorem \ref{Fdiscon}; 
\item[(iii)] the quotient metric $\nabla_\sQ$ restricted to $\sO$ coincides locally with the original metric $d$ restricted to $\mO$ in the following sense: for every $x \in \mO$ there is a neighbourhood $U_x \subset \mO$ such that, Theorem \ref{L0w},
   $$ d(x_1,x_2) = \nabla_\sQ  \big(\{x_1\},\{x_2\}\big) \forall x_1,\,x_2 \in U_x.\qquad\qquad \Box$$\end{itemize}
If $\mF = \emptyset$, $\big(\sQ, \nabla_\sQ\big)$ is isometric to $(M,d)$, and if  $\mF= M$, as it is when $M$ is indecomposable, see \cite[Thms. 6.5 and 6.23]{bt}, $\sP =  \sQ =\{M\}$, a singleton. 

  Recall that by  the Hahn–Mazurkiewicz Theorem \cite{hahn,maz} a metric space $(\mM, \rho)$ is the continuous image of a closed interval if and only if it is a Peano continuum. Moreover, a continuum $(\mM,\rho)$ is Peano if and only if it admits an equivalent metric $\varrho$ such that, for all $x \in M$ and $\d>0$,
  $$ 
  \ol{\{y \in \mM: \varrho (x,y)< \d\}} = \{y \in \mM: \varrho (x,y)\leq \d\}, ~ \text{\rm see \cite{fraser, nadler}.}
  $$

Section \ref{closing} considers a  generalised version and the limitations of these results.

\textbf{Acknowledgement.}  When I asked  Logan Hoehn (Nipissing University, Canada) whether these results are already known, as seemed likely, he pointed to  \cite[Thm.\,3, p.\,247]{kuratowski} in which  Kuratowski cited a  somewhat neglected paper of R. L. Moore \cite{moore}\footnote{According to MathSciNet \cite{moore} is cited in only two papers, both involving the same author.} who was motivated by the same issues.  In \cite{moore}, a hundred years ago,    Moore developed his theory from first principles but  presented his results in  terminology that might today appear archaic. 
\qed    
      
\section{Terminology and Notation}\label{TN}
 There follows a summary of notation and some relevant  metric-space theory.
 
  A set  is a singleton if it has only one point and   non-degenerate if it has more than one point. 
In a metric space a set $A$ is totally disconnected if all its components are singletons.
\begin{remark}\label{discon} If for all   $a,b \in  A \subset M$ with $a \neq b$, 
there are closed sets $V$ and $W$ in $A$ with
  $
  \text{  $V \cup W =  A$,~~ $V \cap W = \emptyset$,~~ $b \in W$ and   $a \in V$}$, then $A$ is totally disconnected.\qed
\end{remark}

\begin{definition} \label{loccon}  A metric space $(M,d)$ is weakly locally connected \cite[\S4.5]{bt} at $x$   if for all $\e>0$ there is a connected neighbourhood $U$  of $x$ with $\diam (U) < \e$, and locally connected  \cite[\S 4.2]{bt}  at $x$ if there is an open connected neighbourhood $U$  of $x$ with $\diam (U) < \e$.   
 Then $M$ is said to be locally (or weakly locally) connected if it is locally (or weakly locally) connected at all $x \in M$.\qed  \end{definition}

     Obviously $M$ is weakly locally connected at  $x$ if it is locally connected at $x$, but if $M$ is not locally connected at $x$ it can still be weakly locally connected at $x$, for an example see \cite[Rmk. 4.22]{bt}.  
   However, a metric space $M$ is  locally connected if and only if $M$ is weakly locally connected \cite[Lem. 4.23]{bt}, and such a metric space is said to be a Peano metric space.

  If, for convenience as  in  \cite[Ch. 5]{bt}, a point  where $M$ is not weakly locally connected is referred to as  a congestion point\footnote{Moore \cite[p. 307]{moore} referred to points where $M$ is weak locally connected as points where $M$ is ``connected im kleinen'', and he referred to our  congestion points as ``irregular points''.},
  $\sN(M)$  in \eqref{F&O} denotes the set of  congestion points of $(M,d)$  and a continuum is a Peano continuum if and only if  $\sN(M) = \emptyset$. The following classical results are important. 
\begin{theorem}\label{whyburn}    If $M$ is a continuum, no component of $\sN(M)$ is a singleton. 
\\{ Proof. \rm See \cite[p. 18]{whyburn}, \cite[p. 102]{wilder} and \cite[Thm. 5.8]{bt}, and references therein. \qed}\end{theorem}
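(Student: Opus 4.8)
The plan is to prove a stronger, constructive statement directly: for every congestion point $p \in \sN(M)$ I will produce a nondegenerate subcontinuum $Q' \subseteq \sN(M)$ with $p \in Q'$. The component of $p$ in $\sN(M)$ must then contain $Q'$, so that component is nondegenerate and no component of $\sN(M)$ is a singleton. (If $\sN(M)=\emptyset$ there is nothing to prove; and if $M$ is a singleton then $\sN(M)=\emptyset$, so I may assume $M$ is nondegenerate.) The engine is a convergence continuum extracted by the boundary bumping lemma, which I recall in the form: if $E$ is a nonempty proper closed subset of a continuum, then every component of $E$ meets the boundary $\pd E$.

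First I would fix the scales. Since $M$ fails to be weakly locally connected at $p$, there is $\e_1>0$ admitting no connected neighbourhood of $p$ of diameter $<\e_1$. Choose $\e_0>0$ with $\e_0 < \e_1/2$ and small enough that $E := \ol{B(p,\e_0)} \neq M$, and let $K$ be the component of $p$ in $E$. As $\diam K \leq 2\e_0 < \e_1$ and $K$ is connected, $K$ cannot be a neighbourhood of $p$; hence there are points $q_n \to p$ with $q_n \in E \setminus K$. Let $Q_n$ be the component of $q_n$ in $E$, so $Q_n \cap K = \emptyset$. Boundary bumping (applied in $M$ to the proper closed set $E$) forces each $Q_n$ to meet $\pd E \subseteq \{y : d(p,y)=\e_0\}$, whence $\diam Q_n \geq \e_0 - d(p,q_n)$. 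Passing to a Hausdorff-convergent subsequence in the compact hyperspace of subcontinua of $M$, I obtain a subcontinuum $Q = \lim Q_n$ with $p \in Q$, with $\diam Q \geq \e_0$, and with $Q \subseteq K$ (since $Q$ is connected, lies in $E$, and contains $p$); in particular $Q_n \cap Q = \emptyset$ for every $n$.

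The crux, which I expect to be the main obstacle, is to show that the points of $Q$ near $p$ are all congestion points, and here the decisive device is a disjointness forced by working inside $E$. Let $x \in Q$ with $d(p,x) < \e_0$ and set $r := \e_0 - d(p,x) > 0$, so $\ol{B(x,r)} \subseteq E$. If $M$ were weakly locally connected at $x$, taking $\e = r$ would give a connected neighbourhood $U$ of $x$ with $\diam U < r$; then $U \subseteq \ol{B(x,r)}$, and being connected and containing $x$, $U$ lies in the component $C$ of $x$ in $\ol{B(x,r)}$. The linchpin is $C \subseteq K$: as $C$ is a connected subset of $E$ containing $x \in K$, it sits inside the $E$-component $K$ of $x$, and therefore $C \cap Q_n = \emptyset$ for all $n$. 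But $U \subseteq C$ is a neighbourhood of $x$, so $B(x,\delta) \subseteq C$ for some $\delta>0$, while $Q_n \to Q \ni x$ supplies points $a_n \in Q_n$ with $a_n \to x$, hence $a_n \in B(x,\delta) \subseteq C$ for large $n$ — contradicting $C \cap Q_n = \emptyset$. Thus $x \in \sN(M)$. Finally, applying boundary bumping inside the continuum $Q$ to the proper closed subset $Q \cap \ol{B(p,\e_0/2)}$ yields a subcontinuum $Q' \ni p$ reaching the sphere of radius $\e_0/2$, so $\diam Q' \geq \e_0/2$; every point of $Q'$ lies within $\e_0/2 < \e_0$ of $p$, so $Q' \subseteq \sN(M)$ by the preceding paragraph. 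This nondegenerate $Q' \subseteq \sN(M)$ through $p$ completes the proof.
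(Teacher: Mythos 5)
Your proof is correct in substance. Note first that the paper does not actually prove this theorem: it simply cites Whyburn, Wilder and \cite[Thm.~5.8]{bt}, so there is no in-paper argument to compare against. What you have written is a self-contained version of the classical ``convergence continuum'' proof that lives in those references: boundary bumping in $E=\ol{B(p,\e_0)}$ produces components $Q_n$ of definite diameter disjoint from the component $K$ of $p$, a Hausdorff-limit continuum $Q\subset K$ through $p$ inherits that diameter, and the disjointness $Q_n\cap K=\emptyset$ kills any small connected neighbourhood of a point $x\in Q$ near $p$, forcing $x\in\sN(M)$. All the main steps check out, including the slightly delicate inclusion $\ol{B(x,r)}\subseteq E$ (which holds because $B(x,r)\subseteq B(p,\e_0)$) and the use of $U\subseteq K$ to contradict $a_n\to x$ with $a_n\in Q_n$. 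One small point to tighten: in the last step you justify that $Q\cap\ol{B(p,\e_0/2)}$ is a \emph{proper} subset of $Q$ by appealing to $\diam Q\geq\e_0$, but that inequality alone is compatible with $Q\subseteq\ol{B(p,\e_0/2)}$ (two points of a set of diameter $\e_0$ can both lie within $\e_0/2$ of $p$). The fix is already implicit in your construction: each $Q_n$ meets $\{y:d(p,y)=\e_0\}$, so the Hausdorff limit $Q$ contains a point $z$ with $d(p,z)=\e_0>\e_0/2$, which gives properness directly. With that one sentence added, the argument is complete and would serve as an honest proof of the theorem the paper only quotes.
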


 \begin{theorem} \label{whythm} Suppose  $R$ and $S$ are non-empty, disjoint, closed subsets of a compact set  $H$  in a metric space $M$ and no component of $H$ intersects both $R$ and $S$. Then  there exist closed sets $H_r,H_s$  with
\begin{equation*}  H = H_r \cup H_s,\quad
H_r\cap H_s = \emptyset,\quad R \subset H_r \text{ and }S \subset H_s. 
\end{equation*}
{Proof. \rm See \cite[(9.3), p. 12]{whyburnsthm} and \cite[Thm. 3.53]{bt}. \qed}
\end{theorem}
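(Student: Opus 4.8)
The plan is to derive the separation from the behaviour of quasi-components, using that $H$, being a compact subset of the metric space $M$, is itself a compact metric (hence normal) space. For $x \in H$ write $K_x$ for the component of $x$ in $H$ and $Q_x = \bigcap\{C : C \text{ is relatively open-and-closed (clopen) in } H,\ x \in C\}$ for its quasi-component. The one structural fact I would invoke is the classical coincidence $K_x = Q_x$, valid in every compact Hausdorff space. Its only nontrivial half, $Q_x \subseteq K_x$, follows by showing $Q_x$ is connected: if $Q_x = A \cup B$ with $A,B$ disjoint, nonempty and relatively closed and $x \in A$, then normality of $H$ supplies disjoint open sets $U \supseteq A$, $V \supseteq B$, and compactness yields a clopen $C \ni x$ with $Q_x \subseteq C \subseteq U \cup V$; the set $C \cap U$ is then clopen and contains $x$, so $B \subseteq Q_x \subseteq C \cap U \subseteq U$, contradicting $B \subseteq V$ and $U \cap V = \emptyset$.

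With this in hand the argument is short. Note first that $R$ and $S$, being closed in the compact set $H$, are themselves compact. Now I translate the hypothesis: for each $x \in R$ the component $K_x$ contains the point $x \in R$, so by assumption $K_x \cap S = \emptyset$, and hence $Q_x \cap S = \emptyset$. Since $Q_x$ is an intersection of clopen sets disjoint from the compact set $S$, a finite subfamily already misses $S$; that is, there are clopen sets $C_1,\dots,C_n$ containing $x$ with $(C_1 \cap \cdots \cap C_n)\cap S = \emptyset$, and $C_x := C_1 \cap \cdots \cap C_n$ is a clopen neighbourhood of $x$ in $H$ with $C_x \cap S = \emptyset$.

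Next, the family $\{C_x : x \in R\}$ is an open cover of the compact set $R$, so finitely many $C_{x_1},\dots,C_{x_m}$ suffice. Setting $H_r = C_{x_1} \cup \cdots \cup C_{x_m}$ and $H_s = H \setminus H_r$, the set $H_r$ is clopen in $H$ as a finite union of clopen sets, whence $H_s$ is clopen too; both are therefore closed in $H$, and so closed in $M$ since $H$ is compact. By construction $R \subseteq H_r$ while $H_r \cap S = \emptyset$, so $S \subseteq H_s$, and plainly $H = H_r \cup H_s$ with $H_r \cap H_s = \emptyset$, which is exactly the asserted separation.

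The single genuine obstacle is the equality $K_x = Q_x$: the separation hypothesis is phrased in terms of components, whereas the clopen-set machinery that produces $H_r$ naturally controls only quasi-components, and without compactness the two can differ. Everything after that reduction is a routine double application of compactness (of $S$ and then of $R$), so the crux is to ensure that compactness of $H$, together with the normality it confers, is what bridges components and quasi-components.
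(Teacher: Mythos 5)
Your proof is correct. The paper itself gives no argument for this theorem, deferring entirely to Whyburn \cite[(9.3), p.~12]{whyburnsthm} and \cite[Thm.~3.53]{bt}; what you have written is the standard self-contained proof that those sources (in essence) contain, namely the reduction to the coincidence of components and quasi-components in a compact Hausdorff space, followed by two routine compactness extractions. You correctly identify that coincidence as the crux: the hypothesis speaks of components, the clopen machinery controls quasi-components, and compactness of $H$ is exactly what closes the gap. Two small points you should make explicit if you write this out in full: (i) in the connectedness argument for $Q_x$, the sets $A$ and $B$ are closed in $H$ (not merely in $Q_x$) because $Q_x$, being an intersection of closed sets, is closed in $H$ --- this is what licenses the appeal to normality of $H$; and (ii) the clopen-ness of $C\cap U$ follows from the identity $C\cap U = C\setminus V$, which holds precisely because $C\subseteq U\cup V$ and $U\cap V=\emptyset$, so $C\cap U$ is open as an intersection of open-in-$H$ sets and closed as the difference of the closed set $C$ and the open set $V$. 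With those two lines added, the argument is complete, and the final assembly ($H_r$ a finite union of clopen sets missing $S$, $H_s$ its complement) delivers exactly the asserted separation.
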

The rest of Section \ref{TN}  summarises material on quotient spaces from \cite[Ch.\,3]{bby}.

\subsection{Partitions and  Equivalent Relations}

A partition $\sP$ of a set  $M$ is a family  of mutually disjoint,  non-empty sets $P$  with $\cup_{P\in \sP} P = M$, and for a partition  $\sP$   an equivalence relation $\sim$  on  $M$ is defined  by
\begin{equation}\label{sim}\text{$x\sim y \iff x$  and $y$ belong to the same $P \in \sP$},\end{equation}
and  $\sP$ is the set of  equivalent classes of $\sim$.
\begin{definition}\label{refinement}
A partition $\sP$ of $M$ is a refinement of  a partition $\sR$ of $M$  if for all $P \in \sP$ there is $R \in \sR$ with $P \subset R$.  \qed
\end{definition}

When $M\neq \emptyset$ and $d:M \x M\to [0,\infty)$ satisfies, for all $x,y,z \in M$,
$$(\a) ~ d(x,y)= d(y,x);~~
(\b) ~ d(x,z) \leq d(x,y) + d(y,z);
~~(\g) ~ d(x,y) =0 \Leftrightarrow x=y,$$
$(M,d)$  is a metric space and  for  $X, Y \subset M$,
\begin{equation} \dist (X,Y): = \inf \{d(x,y): x \in X,\, y \in Y\}. \label{dist}\end{equation}

If  $\Delta: M\x M\to [0,\infty)$ satisfies ($\a$), ($\b$) and $\Delta(x,x)=0$ for all $x \in M$, but $\Delta(x,y)= 0$ does not  imply  $x = y$,   then $(M,\Delta)$ is called a pseudo-metric space.

Then for a  pseudo-metric space $(M,\Delta)$  an  equivalence relation $\approx$ is defined by
 \begin{equation}\label{erD}
 x \approx y \iff \Delta(x,y)=0, \quad x,y \in M,
 \end{equation}
 and  a metric $\nabla$ is defined on  the set  $\sR$ of equivalence classes  of $\approx$ by
 \begin{equation}\label{Del<}
  \nabla(R_1,R_2): = \Delta(r_1,r_2) \text { for arbitrary }r_1 \in R_1,~ r_2 \in R_2, ~~ R_1,~R_2 \in \mathscr R.
  \end{equation}

Since $\sR$ is a partition of $M$, a function  $\pi:M \to \sR$ is   defined by the formula
\begin{gather}\label{pi} x \in \pi(x) \in \sR, \quad x \in M,\intertext{and  by \eqref{Del<}}\label{picont}
\nabla (\pi(x),\pi(y)) = \Delta(x,y) \forall x,\,y \in M.
\end{gather}

\subsection{Partitioned Metric Spaces and Pseudo-metrics}

Now suppose  $(M,d)$ is a metric space, $\sP$ is a partition of  $M$ and  $\sim$ is  defined by \eqref{sim}.
\begin{definition}\label{pstring}  For   $x, y \in M$, a $\sP$-string  joining $x$ to $y$ is  a finite    family of pairs  $\{(x_i,y_i)\in M \x M:1\leq i \leq k\}$  with
\begin{gather}\label{Dxy1}  x_1= x,~y_k = y,\text{ and if } k\geq 2,~ y_{i-1}\sim x_{i}, i\in \{2, \cdots, k\}.
 \intertext{Then following \cite[p.\,62,\,Defn. 3.1.12]{bby} let  $\Delta_\sP: M \x M \to [0,\infty)$ be defined  by} \Delta_\sP(x,y) = \inf_{\mathfrak P(x,y)}\Big\{\sum_{i=1}^k d(x_i,y_i)\Big\},\quad x,\,y \in M,
\label{Dxy2}
\end{gather}
 where $\mathfrak P(x,y)$ is the set of all $\sP$-strings joining $x$ to $y$. \qed
\end{definition}

\begin{theorem}
$(M,\Delta_\sP)$ is a pseudo-metric space with, for $x,\,y$ and $z \in M$,
\begin{align}\label{7}\Delta_\sP (x,y) &\leq d(x,y),\\
\label{8}\Delta_\sP(x,y) &= 0 \text{  if $x \sim y$}, \\
\label{9}\Delta_\sP(x,z)&=\Delta_\sP(y,z)\text{ if $x \sim y$ and $z \in M$}.
\end{align}
\end{theorem}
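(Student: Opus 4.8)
The plan is to read off every assertion directly from the definition \eqref{Dxy2}, using only that $\sP$-strings may be reversed and concatenated without changing their total length. First I would note that $\mathfrak P(x,y)$ is never empty: the single pair $(x,y)$, i.e.\ the case $k=1$, is a $\sP$-string joining $x$ to $y$, since the linking condition is vacuous when $k=1$. Because each summand $d(x_i,y_i)\geq 0$, the infimum in \eqref{Dxy2} lies in $[0,\infty)$, and this one-pair string already yields $\Delta_\sP(x,y)\leq d(x,y)$, which is \eqref{7}. Specialising to $y=x$ gives $\Delta_\sP(x,x)\leq d(x,x)=0$, so $\Delta_\sP(x,x)=0$.

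For symmetry, axiom ($\a$), I would reverse strings. Given a $\sP$-string $\{(x_i,y_i):1\leq i\leq k\}$ from $x$ to $y$, I set $(a_j,b_j)=(y_{k+1-j},x_{k+1-j})$ for $1\leq j\leq k$. Then $a_1=y_k=y$ and $b_k=x_1=x$, and the reversed linking condition $b_{j-1}\sim a_j$ reads $x_{i+1}\sim y_i$ with $i=k+1-j$, which is precisely the original condition $y_i\sim x_{i+1}$ by symmetry of $\sim$; hence $\{(a_j,b_j)\}$ is a $\sP$-string from $y$ to $x$. Since $d(a_j,b_j)=d(x_{k+1-j},y_{k+1-j})$ by axiom ($\a$) for $d$, the two strings have equal length, so $\Delta_\sP(y,x)\leq\Delta_\sP(x,y)$, and interchanging $x$ and $y$ gives equality.

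For the triangle inequality, axiom ($\b$), I would concatenate strings. If $\{(x_i,y_i):1\leq i\leq k\}$ joins $x$ to $y$ and $\{(u_j,v_j):1\leq j\leq m\}$ joins $y$ to $z$, then juxtaposing them produces a family of $k+m$ pairs running from $x$ to $z$; the only new linking requirement, at the junction, is $y_k\sim u_1$, which holds because $y_k=y=u_1$. Its length is the sum of the two lengths, so $\Delta_\sP(x,z)$ is bounded by that sum for every choice of the two strings, and taking the infimum over each factor in turn gives $\Delta_\sP(x,z)\leq\Delta_\sP(x,y)+\Delta_\sP(y,z)$.

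The remaining identities then follow at once. For \eqref{8}, when $x\sim y$ the two-pair family $\{(x,x),(y,y)\}$ is a $\sP$-string from $x$ to $y$, its single linking condition being $x\sim y$, and its length is $d(x,x)+d(y,y)=0$, forcing $\Delta_\sP(x,y)=0$. For \eqref{9}, with $x\sim y$ the triangle inequality together with \eqref{8} gives $\Delta_\sP(x,z)\leq\Delta_\sP(x,y)+\Delta_\sP(y,z)=\Delta_\sP(y,z)$, and symmetrically $\Delta_\sP(y,z)\leq\Delta_\sP(x,z)$, whence equality. No step here is genuinely hard; the one point demanding care is the index bookkeeping in the reversal and concatenation arguments, where one must confirm that the endpoint and linking conditions transform correctly, and that is where I would concentrate the verification.
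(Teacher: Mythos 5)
Your proposal is correct and follows essentially the same route as the paper: the one-pair string for \eqref{7} and $\Delta_\sP(x,x)=0$, string reversal for symmetry, concatenation for the triangle inequality, the two-pair string $\{(x,x),(y,y)\}$ for \eqref{8}, and the triangle inequality for \eqref{9}. The only cosmetic difference is that the paper passes to $\e$-approximate strings before concatenating, whereas you take infima over each factor afterwards; the two are equivalent.
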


\begin{proof} This is Exercise 3.1.13 in \cite{bby}.
By  definition
$\Delta_\sP(x,y) \geq 0,~  x,y \in M$, and  the $\sP$-string $\{(x_1,y_1)\} = \{(x,y)\}$ implies
\begin{equation}\label{less}
0\leq \Delta_\sP(x,y) \leq d(x,y) \forall x,y \in M,
\end{equation}
and hence
\begin{equation} \label{zero}
\Delta_\sP (x,x) = 0 \forall x \in M.
\end{equation}
Also since $d$ is a metric on $M$,   for any $\sP$-string joining  $x$ to $y$ in $M$  with $k \geq 2$,
\begin{align*}
\sum_{i=1}^k d(x_i,y_i) &= \sum_{i=1}^k d(y_i,x_i),~~  x_1= x,~y_k = y, ~~   y_{i-1}\sim x_{i},~  i\in \{2, \cdots, k\}
\\&=  \sum_{j=1}^k d(y'_j,x'_j),~~  y'_1= y,~x'_k = x, ~~   x'_{j-1}\sim y'_{j},~  j\in \{2, \cdots, k\},
\end{align*}
where    $(y'_j,x'_j) = (y_i,x_i)$,   $j = k+1-i$, $ 1\leq j\leq k$. Since, when $k=1$, $d(x_1,y_1) = d(y_1,x_1)$, this shows that
\begin{equation}\label{sym}  \Delta_\sP(x,y) = \Delta_\sP(y,x) \forall x,y \in M.\end{equation}
Now, for $x,y \in M$ and any $\e>0$, there exists a $\sP$-string joining $x$ to $y$ with
\begin{align*}
 \sum_{i=1}^k d(x_i,y_i)&\leq \Delta_\sP(x,y) +\e,~~ x_1 = x,~~ y_k = y.
\intertext{Also for  the same $\epsilon$ and $y$, and any $z \in M$,  there   is a $\sP$-string   $\{(\hat y_i,\hat z_i)\}$  with}
  \sum_{i=1}^\ell d(\hat y_i,\hat z_i)&\leq \Delta_\sP(y,z)+\e, ~~   \hat y_1= y,~~ \hat z_\ell = z.
\end{align*}

Since $y_k = y = \hat y_1$,  concatenating  these $\sP$-strings  yields a $\sP$-string joining $x$ to $z$ and so
\begin{align*}
\Delta_\sP(x,z) &\leq  \sum_{i=1}^k d(x_i,y_i) + \sum_{i=k+1}^{k+\ell} d(\hat  y_{i-k},\hat z_{i-k}) \leq \Delta_\sP(x,y) + \Delta_\sP (y,z)+2\e \end{align*}
 for all $\e>0$. Hence
\begin{equation}\label{triangle}
\Delta_\sP (x,z) \leq \Delta_\sP(x,y) + \Delta_\sP (y,z), \forall x,y,z \in M.
\end{equation}
Since, by \eqref{zero}, \eqref{sym} and \eqref{triangle}, $\Delta_\sP$ satisfies axioms ($\alpha$), ($\beta$)    and the `if' part of ($\gamma$) for a metric space, $(M,\Delta_\sP)$   is a pseudo-metric space satisfying \eqref{less}.
Now if   $x \sim y$ in \eqref{Dxy2} 
\begin{equation}\label{seven}\Delta_\sP(x,y) \leq d(x,x) + d(y,y) = 0,\end{equation}
which proves \eqref{8}, and \eqref{9} follow from \eqref{triangle}.
This completes the proof.\end{proof}

\begin{remark}\label{R5}  For the $\sP$-string in \eqref{Dxy1} which joins $x$ to $y$, let $P_i \in \sP$ be such that
$$
x=x_1\in P_1,~~ y=y_k \in P_{k+1} \text{ and } ~~\{y_i, x_{i+1}\} \subset P_{i+1},\quad 1 \leq i \leq k-1.
$$
Then since by \eqref{dist}  $ \dist(P_{i}, P_{i+1}) \leq d(x_i,y_i)$ when $x_i \in P_i$ and $y_i \in  P_{i+1}$, $1\leq i\leq k$,
it follows from \eqref{Dxy2} that
\begin{gather}\label{Dxy3} \Delta_\sP(x,y) = \inf_{\widetilde{\mathfrak P}(x,y)}\Big\{\sum_{j=1}^{k} \dist(P_j, P_{j+1})\Big\},
\\
 \text{where }\widetilde{\mathfrak P}(x,y)= \big\{\{P_1,\cdots, P_{k+1}\}\subset \sP\colon~ k \in \NN,~ x \in P_1, ~y \in  P_{k+1}\big\}.\quad \Box \notag\end{gather}
\end{remark}

\subsection{Quotient Spaces} 

\begin{definition}\label{QS} For a partition $\sP$  of a metric space $M$ and the pseudo metric $\Delta_\sP$ defined in \eqref{Dxy2}, let $\approx$ be the equivalent relation  on $M$ defined by  \eqref{erD}, namely 
$ x\approx y \iff \Delta_\sP (x,y) = 0,$
and let  $ \sQ$ denote the set of  equivalent classes of $\approx$. 
Then, as in \eqref{Del<},   for $ Q_1,\,Q_2 \in \sQ$ 
$$
\nabla_\sQ(Q_1,Q_2) = \Delta_{\sP}(q_1,q_2) \text{ defines a metric   }\nabla_\sQ \text{ on  }\sQ,
$$
and if,  as in \eqref{pi},   $ x \in \pi_\sQ(x) \in \sQ,~ x \in M$,
$$
\text{ by \eqref{picont} }\nabla_\sQ (\pi_\sQ(x),\pi_\sQ(y)) = \Delta_\sP(x,y)\leq d(x,y) \forall x,\,y \in M.
$$
Then by  \cite[p.\,62,\,Defn,\,3.1.12]{bby}, $({\sQ}, \nabla_\sQ)$ is the quotient metric space   defined by the partition $\sP$ of the metric space $(M,d)$. Note that even in very simple examples  ${\sQ}$ and  $\mathscr P$ may not coincide.  \qed\end{definition}

\begin{lemma}
(a)  Every $Q \in \sQ$ is closed in $M$.\\
(b)  $\sP$ is a refinement (Definition \ref{refinement}) of $\sQ$ with
\begin{equation}\label{nab2}
\nabla_\sQ\big(\pi_\sQ(x_1), \pi_\sQ(x_2)\big) = \Delta_{\sP}(x_1,x_2) \leq d(x_1,x_2),~ x_1,\, x_2 \in M.\end{equation}
(c) If $P_1,\,P_2\in\sP$ and $\dist (P_1,P_2)=0$ there exist $Q \in {\sQ}$ with $P_1\cup P_2 \subset Q$. \\
(d) If $Q \in {\sQ}$ and $P \cap Q = \emptyset$, $P \in \sP$, then $\ol P \cap Q = \emptyset$.
\end{lemma}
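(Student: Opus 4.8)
The plan is to establish (a) and (b) first, since they are the structural facts on which (c) and (d) rest, and then to read (d) off from them directly while (c) needs one extra triangle-inequality estimate.

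First I would prove (a) by showing each $\approx$-class is sequentially closed. Fix $Q \in \sQ$, pick any $q \in Q$, and suppose $x_n \in Q$ with $d(x_n,x)\to 0$. Since $x_n \approx q$ means $\Delta_\sP(x_n,q)=0$, combining the triangle inequality \eqref{triangle} with the domination $\Delta_\sP\leq d$ from \eqref{less} gives
\[
\Delta_\sP(x,q)\leq \Delta_\sP(x,x_n)+\Delta_\sP(x_n,q)\leq d(x,x_n)\to 0,
\]
so $\Delta_\sP(x,q)=0$, i.e.\ $x\approx q$ and $x\in Q$; in a metric space sequential closedness suffices.

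For (b), the displayed identity is just the definition of $\nabla_\sQ$ recorded in Definition \ref{QS}, so only the refinement claim needs argument: if $P\in\sP$ and $x,y\in P$ then $x\sim y$, hence $\Delta_\sP(x,y)=0$ by \eqref{8} and $x\approx y$, so the whole block $P$ lies in the single class $Q=\pi_\sQ(x)$, which is the refinement of Definition \ref{refinement}. Part (d) is then immediate: by (b) there is $Q'\in\sQ$ with $P\subset Q'$; as $P$ is non-empty and meets $Q$ in $\emptyset$ it cannot lie in $Q$, so $Q'\neq Q$ and, the $\approx$-classes being disjoint, $Q'\cap Q=\emptyset$; since $Q'$ is closed by (a), $\ol P\subset \ol{Q'}=Q'$ and therefore $\ol P\cap Q\subset Q'\cap Q=\emptyset$.

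Finally, for (c) the hypothesis $\dist(P_1,P_2)=0$ furnishes $p_n\in P_1$ and $q_n\in P_2$ with $d(p_n,q_n)\to 0$. Fixing $p\in P_1$ and $q\in P_2$, the relations $p\sim p_n$ and $q_n\sim q$ give $\Delta_\sP(p,p_n)=\Delta_\sP(q_n,q)=0$ by \eqref{8}, so \eqref{triangle} and \eqref{less} yield
\[
\Delta_\sP(p,q)\leq \Delta_\sP(p,p_n)+\Delta_\sP(p_n,q_n)+\Delta_\sP(q_n,q)\leq d(p_n,q_n)\to 0;
\]
thus $p\approx q$, so $P_1$ and $P_2$ sit in one class $Q$, which by (b) contains $P_1\cup P_2$. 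I do not expect any step to be a genuine obstacle: everything reduces to the two facts that $\Delta_\sP$ is controlled by $d$ and vanishes on $\sim$-related pairs, and the only place demanding care is the partition bookkeeping in (d) rather than any analytic estimate.
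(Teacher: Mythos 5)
Your proposal is correct and follows essentially the same route as the paper: part (a) via the estimate $\Delta_\sP(x,q)\leq d(x,x_n)$ (the paper phrases this as continuity of $y\mapsto\Delta_\sP(x,y)$, you as sequential closedness, which is the same fact), part (b) from \eqref{8}, part (c) by the identical three-term triangle-inequality chain, and part (d) by the same partition bookkeeping. No gaps.
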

\begin{proof} (a)  Since $Q = \{y \in M: \Delta_\sP(x,y) = 0\}$ when $x \in Q \in \sQ$,
and since $y \mapsto \Delta_\sP(x,y)$, $y \in M$, is  continuous by \eqref{7} and \eqref{triangle},  $Q$ is closed in $M$.

(b) By  \eqref{8}  $\sP$ is a refinement of $\sQ$  and \eqref{nab2} follows from  \eqref{picont} and \eqref{7}. 

(c) By  hypothesis there exists $x_k \in P_1$ and $y_k \in P_2$ such that $ d(x_k,y_k) \to 0$ as $k \to \infty$. Then for $x \in P_1$ and $y \in P_2$, by \eqref{8}, \eqref{triangle} and \eqref{seven}
\begin{align*} \Delta_\sP (x,y) &\leq \Delta_\sP (x,x_k) + \Delta_\sP (x_k,y_k) + \Delta_\sP(y_k,y)\\& =  \Delta_\sP (x_k,y_k) \leq d(x_k,y_k) \to 0.
\end{align*}
Hence $x \approx y$ and so $P_1\cup P_2\subset Q$ for some $Q \in {\sQ}$. 

(d) If $P \cap Q = \emptyset$ where $P \in \sP$ and $Q \in {\sQ}$, it follows from (b) that $P \subset  Q'\neq Q$ for some $Q' \in {\sQ}$ with $Q \cap Q' = \emptyset$. Then $\ol P \subset  Q'$ since $Q' $ is closed,   and hence $\ol P \cap Q = \emptyset$. This completes the proof.
\end{proof}

\section{Special Partitions of $M$}\label{SPM}

This section investigates the properties of $\Delta_\sP$ when $(M,d)$ is a metric space, not necessarily compact,  $\mF$ is a closed subset of $M$, $\mO = M \setminus \mF$,  $\sO$  is the set of singletons $\big\{\{x\}: x \in \mO\big\}$, $\sF$ is the set of components of $\mF$,  and 
\begin{equation}\label{FO}\sP = \sF \cup \sO \text{ which is a partition of $M$ since $M= \mF \cup \mO$},\end{equation}  
and $(\sQ, \nabla_\sQ)$ denotes the quotient metric space  of $(M,d)$  generated by $\sP$.

 \begin{lemma}\label{L1} When  $x,\,y\in M$,
\begin{equation}\Delta_\sP (x,y)
\geq \min\Big\{\max\{\dist (x,\mathcal F),\,\dist (y,\mathcal F)\},\, d(x,y) \Big\}.\label{L1b}
\end{equation}
 \end{lemma}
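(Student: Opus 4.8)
The plan is to establish the inequality for an arbitrary $\sP$-string joining $x$ to $y$ and then pass to the infimum in \eqref{Dxy2}. So fix a $\sP$-string $\{(x_i,y_i):1\leq i\leq k\}$, with $x_1=x$, $y_k=y$ and $y_{i-1}\sim x_i$ for $2\leq i\leq k$. The decisive structural observation is that the elements of $\sP$ are either singletons $\{p\}$ with $p\in\mO$ or components of $\mF$, so each link $y_i\sim x_{i+1}$ is of exactly one of two kinds: either $y_i=x_{i+1}\in\mO$, or $y_i$ and $x_{i+1}$ lie in a common component of $\mF$, in which case both belong to $\mF$. I would call a link of the second kind an $\mF$-link, and the whole argument turns on the dichotomy according to whether the string contains an $\mF$-link.

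If the string contains no $\mF$-link, then $y_i=x_{i+1}$ for every $i$, the pairs telescope, and the triangle inequality yields $\sum_{i=1}^k d(x_i,y_i)\geq d(x_1,y_k)=d(x,y)$, which already dominates the right-hand side of \eqref{L1b}. The substantive case is when at least one $\mF$-link occurs. Here I would let $i_0$ and $i_1$ be the smallest and largest indices $i$ with $y_i\in\mF$, so $1\leq i_0\leq i_1\leq k-1$. Every link before $i_0$ is trivial, so the pairs indexed $1,\dots,i_0$ telescope and $\sum_{i=1}^{i_0}d(x_i,y_i)\geq d(x,y_{i_0})\geq \dist(x,\mF)$, using $y_{i_0}\in\mF$. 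Symmetrically, every link after $i_1$ is trivial, so the pairs indexed $i_1+1,\dots,k$ telescope to give $\sum_{i=i_1+1}^{k}d(x_i,y_i)\geq d(x_{i_1+1},y)\geq\dist(y,\mF)$, because the link $y_{i_1}\sim x_{i_1+1}$ forces $x_{i_1+1}\in\mF$.

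Since $i_0\leq i_1$, the index blocks $\{1,\dots,i_0\}$ and $\{i_1+1,\dots,k\}$ are disjoint, so the two estimates may be added to produce $\sum_{i=1}^k d(x_i,y_i)\geq \dist(x,\mF)+\dist(y,\mF)\geq \max\{\dist(x,\mF),\dist(y,\mF)\}$. Combining the two cases, every $\sP$-string satisfies $\sum_{i=1}^k d(x_i,y_i)\geq \min\{\max\{\dist(x,\mF),\dist(y,\mF)\},\,d(x,y)\}$, and taking the infimum over $\mathfrak P(x,y)$ delivers \eqref{L1b}.

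The step I expect to demand the most care is the bookkeeping that isolates the two disjoint telescoping blocks, together with the verification that an $\mF$-link genuinely places both of its endpoints in $\mF$ (so that the two $\dist(\,\cdot\,,\mF)$ bounds are legitimate). The disjointness, guaranteed precisely by $i_0\leq i_1$, is exactly what upgrades the single bound $\dist(x,\mF)$ to the $\max$ appearing in the statement; everything else reduces to the triangle inequality. The degenerate possibilities $k=1$ and $x\in\mF$ or $y\in\mF$ require no separate treatment, since they are absorbed into the two cases above.
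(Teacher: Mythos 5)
Your proof is correct, but it takes a genuinely different route from the paper's. The paper first uses the symmetry of $d$ and $\Delta_\sP$ to reduce \eqref{L1b} to the one-sided bound $\Delta_\sP(x,y)\geq\min\{\dist(x,\mF),\,d(x,y)\}$ (the stated form then follows from the lattice identity $\max\{\min\{a,c\},\min\{b,c\}\}=\min\{\max\{a,b\},c\}$), and proves that bound by contradiction: a $\sP$-string of total length less than $\eta=\min\{\dist(x,\mF),d(x,y)\}$ is shown inductively to have every $y_i$ in $\mO$, hence to telescope completely, forcing $d(x,y)<\eta\leq d(x,y)$. You instead bound every string directly, splitting off a leading telescoping block that must reach $\mF$ (cost at least $\dist(x,\mF)$) and a disjoint trailing block that must return from $\mF$ (cost at least $\dist(y,\mF)$). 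Your version delivers the two-sided $\max$ in a single pass without the symmetry reduction --- and in fact yields the stronger additive bound $\dist(x,\mF)+\dist(y,\mF)$ whenever the string uses a nontrivial identification --- at the price of a little more bookkeeping; the paper's argument only ever tracks the distance from the single endpoint $x$.

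One small point to tighten: if $y\in\mF$ then the largest index $i$ with $y_i\in\mF$ is $i_1=k$, not $\leq k-1$, and the link $y_{i_1}\sim x_{i_1+1}$ you invoke does not exist. Either let $i_0,i_1$ range only over the link indices $1,\dots,k-1$ (i.e.\ over the positions of $\mF$-links, which is evidently what you intend), or note that in this degenerate case $\dist(y,\mF)=0$ and the second estimate is vacuous. Either repair is one line and does not affect the rest of the argument.
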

 \begin{proof} First note that since $d$ and $\Delta_\sP$ are symmetric,  \eqref{L1b} will follow if 
 \begin{equation}
 \label{L1o} \Delta_\sP (x,y) \geq \min\big\{\dist (x,\mathcal F), d(x,y)\big\} \forall x,\,y \in M. \end{equation}

  Now if $x \in \mF$ or if $x=y$,  \eqref{L1o} holds  because $\Delta_\sP(x,y)  \geq 0$ and the right side is zero.  So for some $x,y \in M$ suppose that  $x \notin \mF$, $x \neq y$  and  \eqref{L1o} is false. Then
 $$0\leq \Delta_\sP (x,y) <  \eta:= \min\{\dist (x,\mathcal F), d(x,y)\} > 0,$$
and by Definition \ref{pstring}  there is a $\sP$-string joining $x$ to $y$  with  
\begin{equation}\label{yes}\sum_{i=1}^k d(x_i,y_i) < \eta.\end{equation}
In particular $d(x,y_1)<\eta \leq d(x,\mathcal F)$ and it follows that $y_1 \in \mathcal O$,
 and hence that $x_2 = y_1$  since $y_1 \sim x_2$  and $\{y_1\} \in \sO$. Then it follows from \eqref{yes} and the triangle inequality that
$$
d(x,y_2)  +  \sum_{i=3}^k d(x_i,y_i)\leq  d(x,y_1) +d(y_1,y_2) +  \sum_{i=3}^k d(x_i,y_i) < \eta,
$$
and, as previously it follows that  $y_2 \in \mathcal O$,  and hence  $x_3 = y_2$. Again   by  \eqref{yes},
\begin{align*}
d(x,y_3)  +  \sum_{i=4}^k d(x_i,y_i)&\leq  d(x,y_1) +d(y_1,y_2) +d(y_2,y_3)+  \sum_{i=4}^k d(x_i,y_i) < \eta.
\end{align*}
Repeating this argument finitely often yields the contradiction that $d(x,y) =d(x,y_k)<\eta <d(x,y)$.  This proves \eqref{L1o}, and \eqref{L1b} follows.
\end{proof}
\begin{corollary}\label{Cxy} When $x \in \mO$ let $\d_x = \dist(x,\mF)/3>0$. Then 
$$d(x_1,x_2) = \Delta_\sP(x_1,x_2) \forall x_1,x_2 \in B_{\d_x}(x)= \{y:d(x,y) < \d_x\}.$$  
\end{corollary}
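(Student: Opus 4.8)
The plan is to combine the lower bound supplied by Lemma \ref{L1} with the universal upper bound $\Delta_\sP \leq d$ to force equality on the small ball. Since \eqref{7} (equivalently \eqref{less}) already gives $\Delta_\sP(x_1,x_2) \leq d(x_1,x_2)$ for all $x_1,x_2 \in M$, it suffices to establish the reverse inequality $\Delta_\sP(x_1,x_2) \geq d(x_1,x_2)$ whenever $x_1,x_2 \in B_{\d_x}(x)$, and the whole content of the corollary then reduces to a short estimate showing that, on this ball, the minimum appearing in \eqref{L1b} is attained by the term $d(x_1,x_2)$ rather than by the separation from $\mF$.

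To that end I would first record two elementary consequences of the triangle inequality for the point-to-set distance. If $x_1 \in B_{\d_x}(x)$ then $d(x,x_1) < \d_x = \dist(x,\mF)/3$, so that $\dist(x_1,\mF) \geq \dist(x,\mF) - d(x,x_1) > 3\d_x - \d_x = 2\d_x$, and symmetrically $\dist(x_2,\mF) > 2\d_x$. On the other hand $d(x_1,x_2) \leq d(x_1,x) + d(x,x_2) < 2\d_x$. Combining these gives $\max\{\dist(x_1,\mF),\dist(x_2,\mF)\} > 2\d_x > d(x_1,x_2)$.

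Now I would invoke Lemma \ref{L1} directly: since the maximum of the two point-to-set distances strictly exceeds $d(x_1,x_2)$, the minimum on the right-hand side of \eqref{L1b} equals $d(x_1,x_2)$, whence $\Delta_\sP(x_1,x_2) \geq d(x_1,x_2)$. Together with the upper bound this yields the claimed identity $d(x_1,x_2) = \Delta_\sP(x_1,x_2)$ on $B_{\d_x}(x)$.

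There is no genuine obstacle here beyond choosing the radius correctly, and the factor $1/3$ in the definition of $\d_x$ is precisely what makes the argument run: it simultaneously guarantees that every point of the ball stays at distance more than $2\d_x$ from $\mF$, while any two points of the ball lie less than $2\d_x$ apart, so that $\mF$ is effectively out of reach of any competing $\sP$-string and the infimum in \eqref{Dxy2} cannot be lowered below $d(x_1,x_2)$. The only thing to verify is that this separation constant dominates the in-ball diameter, which the choice of $1/3$ secures with room to spare.
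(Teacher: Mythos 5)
Your proof is correct and follows essentially the same route as the paper: the upper bound $\Delta_\sP\leq d$ from \eqref{7} combined with the lower bound of Lemma \ref{L1}, after checking that $\dist(x_i,\mF)\geq 2\d_x > d(x_1,x_2)$ for points of the ball so that the minimum in \eqref{L1b} is $d(x_1,x_2)$. The only difference is that you spell out the triangle-inequality estimates that the paper leaves implicit.
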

\begin{proof} The result follows from    \eqref{7} and \eqref{L1b} since  when $x_1,x_2 \in  B_{\d_x}(x)$,   $\dist(x_i,\mF) \geq 2\d_x > d(x_1,x_2)$, $i = 1,2$. 
\end{proof}
 \begin{theorem} \label{Atlast}
 When $\sP$ is defined by \eqref{FO} and $x,y \in \mF$,
 \begin{equation}\label{Dxy4} 
 \Delta_\sP(x,y) = \inf_{\mathfrak F(x,y)}\Big\{\sum_{j=1}^{k} \dist(F_j, F_{j+1})\Big\},
 \end{equation}
 where $\mathfrak F(x,y)$ is the  family of finite sets $\{F_1,\cdots, F_{k+1}\}\subset \sF$ with $x \in F_1$ and $y \in  F_{k+1}$.
\end{theorem}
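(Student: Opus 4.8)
The plan is to establish \eqref{Dxy4} as two opposing inequalities, working throughout with the reformulation of $\Delta_\sP$ in terms of chains of partition elements given by \eqref{Dxy3} in Remark \ref{R5}. Write $\Phi(x,y)$ for the right-hand side of \eqref{Dxy4}.

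One inequality is immediate. Since $\sF\subset\sP$, every chain $\{F_1,\dots,F_{k+1}\}\in\mathfrak F(x,y)$ is also a member of $\widetilde{\mathfrak P}(x,y)$, so the infimum in \eqref{Dxy3} defining $\Delta_\sP(x,y)$ runs over a family containing $\mathfrak F(x,y)$; hence $\Delta_\sP(x,y)\leq\Phi(x,y)$.

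The content lies in the reverse inequality, for which I would show that every chain in $\widetilde{\mathfrak P}(x,y)$ can be collapsed to an $\sF$-chain of no greater cost. Fix an arbitrary $\{P_1,\dots,P_{k+1}\}\in\widetilde{\mathfrak P}(x,y)$. Because $x,y\in\mF$ while each element of $\sO$ is a singleton contained in $\mO=M\setminus\mF$, the endpoints $P_1$ and $P_{k+1}$ must belong to $\sF$. Let $1=j_0<j_1<\cdots<j_m=k+1$ enumerate the indices $j$ for which $P_j\in\sF$; between consecutive such indices the chain passes only through singletons. The crux is the gap estimate
\[
\dist(P_{j_\ell},P_{j_{\ell+1}})\leq\sum_{i=j_\ell}^{j_{\ell+1}-1}\dist(P_i,P_{i+1}),\qquad 0\leq\ell\leq m-1,
\]
which follows from the triangle inequality: if the intervening singletons are $\{z_1\},\dots,\{z_p\}$ then $d(a,b)\leq d(a,z_1)+\sum_{i=1}^{p-1}d(z_i,z_{i+1})+d(z_p,b)$ for all $a\in P_{j_\ell}$ and $b\in P_{j_{\ell+1}}$, and since $a$ and $b$ occur in separate additive terms the infima over $a$ and over $b$ may be taken independently, converting the first and last terms into $\dist(P_{j_\ell},z_1)$ and $\dist(z_p,P_{j_{\ell+1}})$. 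Summing the gap estimate over $\ell$ gives
\[
\sum_{\ell=0}^{m-1}\dist(P_{j_\ell},P_{j_{\ell+1}})\leq\sum_{i=1}^{k}\dist(P_i,P_{i+1}),
\]
whose left side is the cost of the $\sF$-chain $\{P_{j_0},\dots,P_{j_m}\}\in\mathfrak F(x,y)$. Thus $\Phi(x,y)\leq\sum_{i=1}^{k}\dist(P_i,P_{i+1})$ for every chain in $\widetilde{\mathfrak P}(x,y)$, and taking the infimum over $\widetilde{\mathfrak P}(x,y)$ yields $\Phi(x,y)\leq\Delta_\sP(x,y)$ by \eqref{Dxy3}.

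The only delicate point is the gap estimate, and within it the observation that the two endpoints $a$ and $b$ sit in different additive terms, so their infima decouple and the resulting bound is genuinely $\dist(P_{j_\ell},P_{j_{\ell+1}})$ rather than something larger. The degenerate cases — a gap with $p=0$, where the estimate is a trivial equality, and $x,y$ lying in a common component, where both sides of \eqref{Dxy4} vanish — require no separate argument.
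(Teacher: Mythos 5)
Your proposal is correct and follows essentially the same route as the paper: both reduce to the chain formulation of Remark \ref{R5} and then collapse each run of $\sO$-singletons between consecutive $\sF$-elements via the triangle inequality. Your gap estimate, obtained by decoupling the infima over the two endpoints, is a slightly cleaner execution than the paper's choice of $\d/k$-approximately optimal points $y_n\in F_n$, $y_m\in F_m$, but the underlying idea is identical.
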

\begin{proof} 
If $x \in \hat F$, $y \in \check F$ where $\hat F, \check F \in \sF$, by  Remark \ref{R5}
\begin{equation*} \Delta_\sP(x,y) = \inf_{\widetilde{\mathfrak P}(x,y)}\Big\{\sum_{j=1}^{k} \dist(P_j, P_{j+1})\Big\},
 \end{equation*}
  where $\widetilde{\mathfrak P}(x,y)$ is the family of finite sets $\{P_1,\cdots, P_{k+1}\}\subset \sP$ with $x \in P_1=\hat F\in \sF$, $y \in  P_{k+1}= \check F\in \sF$. 
  Now suppose that  
  \begin{equation}\label{Dxy3}
  \inf_{\widetilde{\mathfrak P}(x,y)}\Big\{\sum_{j=1}^{k} \dist(P_j, P_{j+1})\Big\}<\eta,
  \end{equation}  
   and that for some   $P_{j'} \notin \sF$, $ 2\leq j'\leq k$. Then  there exist  $n, m \in \{ 1,\cdots, k+1\}$ with $j' \in \{n+1, \cdots, m-1\}$, such that $P_n=  F_n\in \sF$, $P_m  = F_m\in \sF$, and
   $P_j = \{y_j\} \in \sO$ for all $j \in \{n+1, \cdots, m-1\}$. 
   Therefore, for any $\d>0$ there exist $y_n \in F_n$ and $y_m \in F_m$ such that 
   \begin{multline*}\sum_{j=n}^{m-1} \dist(P_j, P_{j+1}) = \dist(F_n, y_{n+1})+\Big\{\sum_{j=n+1}^{m-2} d(y_{j}, y_{j+1})\Big\} + \dist(y_{m-1}, F_m)\\  \geq \sum_{j=n}^{m-1} d(y_{j}, y_{j+1})-\d/k \geq d(y_n,y_m)-\d/k \geq\dist(F_n,F_m)-\d/k.
  \end{multline*}
Hence 
    $\sum_{j=n}^{m} \dist(P_j, P_{j+1})$ in \eqref{Dxy3} can be replaced by $\dist(F_n, F_m)$ while increasing the value of the sum by at most $\d/k$.
          Since  $x \in F_1 \in \sF$ and $y \in F_{k+1} \in  \sF$, the preceding argument can be repeated at most $k$ times until only elements of $\sF$ remain, whence \eqref{Dxy3} implies
          $$ 
\Delta_\sP(x,y)     \leq      \inf_{\mathfrak F(x,y)}\Big\{\sum_{j=1}^{k} \dist(F_j, F_{j+1})\Big\}<\eta+\d
          $$ 
          and  \eqref{Dxy4} follows since $\d>0$ may be arbitrarily small. 
 \end{proof}

 \begin{corollary}\label{tildehat} When $\sP$ is defined by \eqref{FO} and $\mF$ is compact, $\Delta_\sP(x,y) > 0$ for  $x \in \hat F$ and $y \in \check F$ if $\hat F \neq \check F$ and $\hat F,\,\check F \in \sF$.
\end{corollary}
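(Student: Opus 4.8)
The plan is to exploit the formula of Theorem \ref{Atlast}, which for $x,y \in \mF$ expresses $\Delta_\sP(x,y)$ as an infimum over finite chains of components of $\mF$, and to bound every such chain from below by separating $\hat F$ from $\check F$ with a clopen partition of $\mF$ supplied by Theorem \ref{whythm}. The compactness of $\mF$ enters precisely at this separation step.

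First I would observe that $\hat F$ and $\check F$, being distinct components of the compact set $\mF$, are nonempty, closed and disjoint, and that no component of $\mF$ can intersect both of them (a component meeting $\hat F$ coincides with $\hat F$, by maximality, and so is disjoint from $\check F$). Applying Theorem \ref{whythm} with $H = \mF$, $R = \hat F$ and $S = \check F$ then yields closed sets $H_r, H_s$ with $\mF = H_r \cup H_s$, $H_r \cap H_s = \emptyset$, $\hat F \subset H_r$ and $\check F \subset H_s$. Since $H_r$ and $H_s$ are nonempty, disjoint and closed in the compact set $\mF$, they are compact, and hence $\rho := \dist(H_r, H_s) > 0$.

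Next, because $H_r$ and $H_s$ partition $\mF$ into two disjoint relatively clopen pieces, every component $F$ of $\mF$, being connected, lies entirely in $H_r$ or entirely in $H_s$. Now consider an arbitrary chain $\{F_1, \dots, F_{k+1}\} \in \mathfrak F(x,y)$ as in Theorem \ref{Atlast}. Since $x \in \hat F$ forces $F_1 = \hat F \subset H_r$ and $y \in \check F$ forces $F_{k+1} = \check F \subset H_s$, there must be a crossing index $j_0$ with $F_{j_0} \subset H_r$ and $F_{j_0+1} \subset H_s$; for that index $\dist(F_{j_0}, F_{j_0+1}) \geq \dist(H_r, H_s) = \rho$, and therefore $\sum_{j=1}^k \dist(F_j, F_{j+1}) \geq \rho$. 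As this lower bound holds for every chain in $\mathfrak F(x,y)$, the formula \eqref{Dxy4} gives $\Delta_\sP(x,y) \geq \rho > 0$.

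The main obstacle is the passage from merely \emph{distinct} components to a \emph{quantitative} separation: two distinct components of a compact set may be arbitrarily close, so no individual term $\dist(F_j, F_{j+1})$ in a chain need be bounded below, and a naive argument would fail. It is exactly the compactness of $\mF$, channelled through Theorem \ref{whythm}, that furnishes a uniform gap $\rho$ which no connected chain can cross without paying at least $\rho$; once that gap is in hand, the crossing-index bookkeeping is routine.
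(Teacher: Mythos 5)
Your proof is correct and follows essentially the same route as the paper's: both apply Theorem \ref{whythm} to $H=\mF$, $R=\hat F$, $S=\check F$ to obtain a clopen splitting with positive mutual distance, and then use the chain formula of Theorem \ref{Atlast} together with a crossing-index argument to bound every chain below by that distance. No gaps; the argument matches the paper's in both structure and detail.
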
 
\begin{proof} The   components $\hat F$ and $\check F$  of the compact set $\mF \subset M$ are closed and no component of $\mF$ intersects both $\hat F$ and $\check F$. Therefore, by Theorem \ref{whythm} there exist two closed sets $\hat H$ and $\check H$ with 
$$
\mF = \hat H \cup \check H,~ \hat H \cap \check H = \emptyset,~ \hat F \subset \hat H,~\check F \subset \check H,$$
 and  $ \dist(\hat H,\check H) >0$  by compactness.
Now suppose $\{F_1, \cdots, F_{k+1}\}$ is any finite family of sets in $\sF$ with $F_1 = \hat F$ and $F_{k+1} = \check F$. Then since all these sets   are  connected,  each  is a subset of  either $\hat H$ or $\check H$ and hence there exists $j \in \{1, \cdots, k\}$ such that $F_j \in \hat H$ and $F_{j+1} \in \check H$, and so, for all such families,
    $$\sum_{i=1}^{k} \dist (F_i,F_{i+1})  \geq  \dist (F_j,F_{j+1})\geq \dist(\hat H,\check H) >0.$$ Therefore, by Theorem \ref{Atlast},
     $\Delta_\sP(x,y) \geq \dist(\hat H,\check H) >0$ as required.
  \end{proof}
  \begin{theorem}\label{P=Q}
When $\sP$ is defined by \eqref{FO}, $\mF$ is compact and $(\sQ,\nabla_\sQ)$  denote the corresponding quotient metric space  (Definition \ref{QS}),  $\sP = \sQ$.
 \end{theorem}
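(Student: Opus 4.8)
The plan is to leverage the refinement relation that has already been established. By \eqref{nab2} the partition $\sP$ refines $\sQ$, so every $Q \in \sQ$ is a union of members of $\sP$; consequently $\sP = \sQ$ holds precisely when no $Q \in \sQ$ contains two distinct blocks of $\sP$. Since $p_1 \approx p_2$ iff $\Delta_\sP(p_1,p_2)=0$, it therefore suffices to prove the strict positivity
\begin{equation*}
\Delta_\sP(p_1,p_2) > 0 \quad\text{whenever } p_1 \in P_1,\ p_2 \in P_2,\ P_1,P_2 \in \sP,\ P_1 \neq P_2 .
\end{equation*}
In other words, I would show that distinct blocks of $\sP$ are never collapsed to a single point of $\sQ$; together with refinement this forces $\sP = \sQ$.

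I would then split into three cases according to whether each $P_i$ lies in $\sF$ or in $\sO$. When $P_1,P_2 \in \sF$ are distinct components of $\mF$, the desired inequality is exactly the content of Corollary \ref{tildehat}, whose proof uses the compactness of $\mF$ through Whyburn's separation theorem; nothing further is needed. In the remaining cases at least one block, say $P_2 = \{y\}$, lies in $\sO$, so $y \in \mO$. Here I would invoke Lemma \ref{L1}: since $\mF$ is closed and $y \notin \mF$ we have $\dist(y,\mF) > 0$, and since $p_1 \neq y$ (the blocks being disjoint) we have $d(p_1,y) > 0$, whence
\begin{equation*}
\Delta_\sP(p_1,y) \;\geq\; \min\big\{\max\{\dist(p_1,\mF),\dist(y,\mF)\},\, d(p_1,y)\big\} \;\geq\; \min\{\dist(y,\mF),\, d(p_1,y)\} \;>\; 0 .
\end{equation*}
This single estimate disposes of both the case $P_1 \in \sF$, $P_2 \in \sO$ and the case $P_1,P_2 \in \sO$.

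The main point to observe is that essentially all the difficulty has already been absorbed into Corollary \ref{tildehat}: the genuinely hard $\sF$--$\sF$ positivity, which rests on the Whyburn separation argument and the compactness of $\mF$, is done there, so the theorem itself is a short assembly. The only subtlety worth flagging is the division of labour between the hypotheses, namely that mere closedness of $\mF$ is enough to handle every case involving a singleton block (because it guarantees $\dist(y,\mF)>0$ for $y \in \mO$), while compactness enters only via Corollary \ref{tildehat} for the two-component case.
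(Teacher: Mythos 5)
Your proof is correct and follows essentially the same route as the paper's: both reduce the claim to showing $\Delta_\sP$ is strictly positive between points of distinct blocks, dispose of every case involving a point of $\mO$ via Lemma \ref{L1}, and handle the case of two distinct components of $\mF$ via Corollary \ref{tildehat}. Your explicit framing through the refinement property and the three-way case split is just a slightly more detailed presentation of the same argument.
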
  
 \begin{proof}
 Since the aim is to show that $\Delta_\sP(x,y) = 0$ implies $\{x,y\} \subset P$ for some $P \in \sP$  note, from    Lemma \ref{L1}, that $\Delta_\sP(x,y) > 0$ if $x \neq y$ and  $\{x,y\} \not\subset \mF$ . Moreover, if $\{x,y\} \subset \mF$ with $x \in \hat F$ and $y \in \check F$, where $\hat F\neq\check F$ are elements of $\sF$, it follows  from Corollary \ref{tildehat} that $\Delta_\sP(x,y)>0$. Therefore,  $\Delta_\sP(x,y) =0$ and $x \neq y$ implies that both $x$ and $y$ are in the same $F \in \sF$, and the result follows since $\sF\cup \sO= \sP$.
 \end{proof}

\section{Peano Quotients of Continua}\label{PQMST}

This section deals with the particular case of Section \ref{SPM}  when $(M,d)$ is a continuum, the partition $\sP$ in \eqref{FO} is defined by \eqref{F&O} and  $(\sQ,\nabla_\sQ)$ denotes the corresponding quotient metric space.

\begin{theorem} \label{comcon}The metric space $\bs ({\sQ}, \nabla_\sQ)$ is a continuum.
\end{theorem}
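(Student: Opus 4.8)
The plan is to exploit the fact that, as a metric space, $(\sQ,\nabla_\sQ)$ is a continuous image of $(M,d)$, so that the three defining properties of a continuum—non-emptiness, compactness and connectedness—transfer automatically. Since $M\neq\emptyset$ and the projection $\pi_\sQ\colon M\to\sQ$ of Definition \ref{QS} is surjective (every $Q\in\sQ$ is an equivalence class and hence equals $\pi_\sQ(x)$ for any $x\in Q$), non-emptiness is immediate. The crux is continuity of $\pi_\sQ$, and this is already recorded in \eqref{nab2}: for all $x,y\in M$,
\begin{equation*}
\nabla_\sQ\big(\pi_\sQ(x),\pi_\sQ(y)\big)=\Delta_\sP(x,y)\leq d(x,y),
\end{equation*}
so $\pi_\sQ$ is $1$-Lipschitz, hence continuous as a map between the metric spaces $(M,d)$ and $(\sQ,\nabla_\sQ)$.

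With continuity in hand I would invoke the two standard topological facts that the continuous image of a compact space is compact and the continuous image of a connected space is connected. Because $(M,d)$ is a continuum it is compact and connected, so $\sQ=\pi_\sQ(M)$ is a compact, connected, non-empty metric space, i.e.\ a continuum. No structural information about the equivalence classes $Q\in\sQ$ is needed for this argument; in particular one does not need the identification $\sP=\sQ$ of Theorem \ref{P=Q}, nor any property of the congestion set $\mF$ beyond what is built into $\Delta_\sP$.

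The one point demanding care—and the only place a subtlety could hide—is that compactness and connectedness must be understood with respect to the metric topology induced by $\nabla_\sQ$, which is precisely the topology in which the target of $\pi_\sQ$ is taken. Since $\nabla_\sQ$ is a genuine metric (Definition \ref{QS}), $(\sQ,\nabla_\sQ)$ is automatically Hausdorff, and the Lipschitz estimate guarantees continuity into this metric topology, so no mismatch between the quotient topology and the metric topology can arise. I therefore expect no real obstacle: the entire content of the theorem is the elementary observation that $\pi_\sQ$ is a continuous surjection from a continuum.
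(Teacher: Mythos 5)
Your proof is correct and follows essentially the same route as the paper: both rest on the observation that $\pi_\sQ$ is a continuous surjection from the continuum $(M,d)$ onto $(\sQ,\nabla_\sQ)$ via the Lipschitz estimate \eqref{nab2}. The only cosmetic difference is that the paper unfolds the compactness half as an explicit sequential argument (choosing $q_k\in Q_k$ and passing to a convergent subsequence in $M$), whereas you invoke the general fact that continuous images of compact sets are compact; the content is identical.
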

\begin{proof} 
To show compactness of  $\bs ({\sQ}, \nabla_\sQ)$ it suffices to show that every sequence 
$\{Q_k\}\subset {\sQ}$ has a convergent subsequence in $({\sQ}, \nabla_\sQ)$. So suppose $q_k \in Q_k$ and, since $(M,d)$ is compact, without  loss of generality suppose  that $d(q_k,q) \to 0$ where $q \in Q \in {\sQ}$ because $\sQ$ is a partition of $M$.  It then follows from Definition \ref{QS} that $({\sQ}, \nabla_\sQ)$ is compact since
$$
\nabla_\sQ(Q_k, Q) = \Delta_\sP(q_k,q) \leq d(q_k,q) \to 0.
$$

Since by \eqref{nab2}
$$
\nabla_\sQ (\pi_\sQ(x),\pi_\sQ(y)) = \Delta_\sP(x,y)\leq d(x,y) \forall x,\,y \in M,
$$
$\pi_\sQ:M \to \sQ$ is a continuous surjection and so $({\sQ}, \nabla_\sQ)$ is connected because $(M,d)$ is connected. This completes the proof
\end{proof}

  \begin{theorem}\label{Fdiscon}
The metric space $(\sF,\nabla_\sQ)$   is compact and totally disconnected.
\end{theorem}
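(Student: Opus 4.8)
The plan is to handle compactness and total disconnectedness separately, using throughout the identity $\sP=\sQ$ from Theorem \ref{P=Q}: this means each $F\in\sF$ is \emph{itself} an element of $\sQ$, so that $\pi_\sQ(x)=F$ whenever $x\in F\in\sF$, and the $\nabla_\sQ$-distance between two components is just their $\Delta_\sP$-distance.

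For compactness I would argue sequentially. Given $\{F_k\}\subset\sF$, choose $x_k\in F_k\subset\mF$; since $\mF$ is a closed subset of the compact space $M$ it is compact, so after passing to a subsequence $d(x_{k_j},x)\to 0$ for some $x\in\mF$. Letting $F\in\sF$ be the component of $\mF$ containing $x$, so that $\pi_\sQ(x)=F$, inequality \eqref{nab2} gives
$$\nabla_\sQ(F_{k_j},F)=\nabla_\sQ(\pi_\sQ(x_{k_j}),\pi_\sQ(x))=\Delta_\sP(x_{k_j},x)\leq d(x_{k_j},x)\to 0,$$
so $F_{k_j}\to F$ in $(\sF,\nabla_\sQ)$. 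Hence $(\sF,\nabla_\sQ)$ is sequentially compact, and being a metric space, compact.

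For total disconnectedness I would verify the hypothesis of Remark \ref{discon} with $A=\sF$. Fix distinct $\hat F,\check F\in\sF$: these are disjoint closed subsets of the compact set $\mF$ and no component of $\mF$ meets both, so Theorem \ref{whythm} furnishes disjoint closed sets $\hat H,\check H$ with $\mF=\hat H\cup\check H$, $\hat F\subset\hat H$, $\check F\subset\check H$, and by compactness $\rho:=\dist(\hat H,\check H)>0$. Because each $F\in\sF$ is connected while $\mF$ is the disjoint union of the two closed (hence relatively open) sets $\hat H,\check H$, every $F$ lies wholly in exactly one of them. This partitions $\sF=V\cup W$, $V\cap W=\emptyset$, where $V=\{F:F\subset\hat H\}$ and $W=\{F:F\subset\check H\}$, with $\hat F\in V$ and $\check F\in W$.

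The remaining task, and the step I expect to be the main obstacle, is to show $V$ and $W$ are closed (indeed clopen) in $(\sF,\nabla_\sQ)$; for this one must upgrade the separation from $\{\hat F,\check F\}$ to all of $\sF$. I would reuse the chain estimate from the proof of Corollary \ref{tildehat}: for any $F\in V$, $F'\in W$ and any finite family $\{F_1,\dots,F_{k+1}\}\subset\sF$ with $F_1=F$, $F_{k+1}=F'$, each $F_i$ lies in $\hat H$ or in $\check H$, and since the chain begins in $\hat H$ and ends in $\check H$ some consecutive pair crosses over, giving $\sum_i\dist(F_i,F_{i+1})\geq\dist(\hat H,\check H)=\rho$. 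By Theorem \ref{Atlast} this yields $\nabla_\sQ(F,F')\geq\rho$ for \emph{all} $F\in V$, $F'\in W$, so $V$ and $W$ sit at $\nabla_\sQ$-distance at least $\rho>0$; each is therefore open, and being complementary within $\sF$, each is also closed. Remark \ref{discon} then shows $\sF$ is totally disconnected. The crux is precisely this uniform positive lower bound: Whyburn's theorem separates the two chosen components, but one must check the same pair $\hat H,\check H$ keeps \emph{every} member of $V$ bounded away from \emph{every} member of $W$ in the quotient metric, which is exactly what makes $V$ and $W$ clopen.
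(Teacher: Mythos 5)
Your proof is correct and follows essentially the same route as the paper: compactness via the $1$-Lipschitz projection $\pi_\sQ$ applied to the compact set $\mF$, and total disconnectedness via Theorem \ref{whythm} combined with Remark \ref{discon}, using the fact that each connected $F\in\sF$ lies wholly in $\hat H$ or $\check H$. The only difference is that where the paper simply asserts that $\pi_\sQ(\hat H)$ and $\pi_\sQ(\check H)$ are closed, you justify this via the uniform lower bound $\nabla_\sQ(F,F')\geq\dist(\hat H,\check H)>0$ from Theorem \ref{Atlast}, which is a valid and somewhat more explicit way to complete that step.
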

\begin{proof} Since $\sF = \pi_\sQ(\mF)$ where $\pi_\sQ: (M,d) \to (\sQ, \nabla_\sQ)$ is continuous and $\mF$ is compact, it follows that $\sF$ is compact in $(\sQ, \nabla_\sQ)$.

To show  that  $(\sF, \nabla_\sQ)$ is totally disconnected, let $\hat F \in \sF$ and $\check F \in \sF \setminus \{\hat F\}$.
Then as in the proof of Corollary \ref{tildehat} there are compact sets $\hat H$ and $\check H$ in  $\mF$ with
$$  \mF = \hat H \cup \check H,\quad
\hat H\cap \check H = \emptyset,\quad \hat F \subset \hat H \text{ and }\check F \subset \check H
$$
and every  element of $\sF$ is a subset  either of $\hat H$ or of $\check H$. Therefore 
$$\pi_\sQ(\hat H) \colon= \{F \in \sF: F \subset \hat H\} \text{ and  }\pi_\sQ(\check H) = \{F \in \sF: F \subset \check H\}$$ 
are closed in $(\sF, \nabla_\sQ)$, $ \pi_\sQ(\hat H) \cup \pi_\sQ(\check H) = \sF \text{ and }  \pi_\sQ(\hat H) \cap \pi_\sQ(\check H) = \emptyset$. Since
$\hat F \in  \pi_\sQ(\hat H)$ and $\check F \in \pi_\sQ(\check H)$,
 by Remark \ref{discon} $\sF$ is totally disconnected.
\end{proof}    

\begin{theorem}\label{L0w} For every $x \in \mO$ there is a neighbourhood $U_x \subset \mO$ such that
   $$ d(x_1,x_2) = \nabla_\sQ  \big(\{x_1\},\{x_2\}\big) \forall x_1,\,x_2 \in U_x.$$
\end{theorem}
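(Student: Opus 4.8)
The plan is to take $U_x$ to be a small metric ball about $x$ that avoids $\mF$ and on which the pseudo-metric $\Delta_\sP$ already agrees with $d$, since that local coincidence is exactly the content of Corollary \ref{Cxy}; the only genuine work is then to transfer the identity from $\Delta_\sP$ to $\nabla_\sQ$.

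First I would fix $x \in \mO$ and set $\d_x = \dist(x,\mF)/3$, noting $\d_x > 0$ because $x \notin \mF$ and $\mF$ is closed, and take $U_x = B_{\d_x}(x)$. I would then verify $U_x \subset \mO$: for any $y \in U_x$ the triangle inequality gives $\dist(y,\mF) \geq \dist(x,\mF) - d(x,y) > 3\d_x - \d_x = 2\d_x > 0$, so $y \notin \mF$ and hence $y \in \mO$. Next, for any $x_1, x_2 \in U_x$, Corollary \ref{Cxy} applies directly and yields $d(x_1,x_2) = \Delta_\sP(x_1,x_2)$.

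The final and essentially only substantive step is to identify $\Delta_\sP(x_1,x_2)$ with $\nabla_\sQ(\{x_1\},\{x_2\})$. For this I would invoke Theorem \ref{P=Q}: since $(M,d)$ is a continuum, $\mF = \ol{\sN(M)}$ is a closed subset of a compact space and hence compact, so $\sP = \sQ$ and the singletons $\{x_1\},\{x_2\} \in \sO \subset \sP = \sQ$ are genuine elements of $\sQ$. This is precisely what makes the expression $\nabla_\sQ(\{x_1\},\{x_2\})$ meaningful: without $\sP = \sQ$ a singleton could be merely a proper subset of some class of $\sQ$ rather than a point of $\sQ$. With $\{x_1\},\{x_2\} \in \sQ$ established, Definition \ref{QS} gives $\nabla_\sQ(\{x_1\},\{x_2\}) = \Delta_\sP(x_1,x_2)$, and combining this with $\Delta_\sP(x_1,x_2) = d(x_1,x_2)$ finishes the argument.

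I do not anticipate a real obstacle here; the theorem is largely a repackaging of Corollary \ref{Cxy} once Theorem \ref{P=Q} guarantees that singletons of $\mO$ persist as points of $\sQ$. The point deserving care, rather than a hard argument, is the well-posedness of $\nabla_\sQ(\{x_1\},\{x_2\})$, i.e. that $\{x_1\}$ and $\{x_2\}$ genuinely lie in $\sQ$ and are not just subsets of elements of $\sQ$ — and this is exactly where the compactness of $\mF$, through Theorem \ref{P=Q}, is used.
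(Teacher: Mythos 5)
Your proposal is correct and follows essentially the same route as the paper, which simply combines Corollary \ref{Cxy} with \eqref{nab2}; your extra care in invoking Theorem \ref{P=Q} to confirm that the singletons $\{x_1\},\{x_2\}$ are genuine elements of $\sQ$ (so that $\nabla_\sQ(\{x_1\},\{x_2\})$ is well-posed) is a point the paper leaves implicit, but it is the same argument.
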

\begin{proof} This is immediate from Corollary \ref{Cxy} and \eqref{nab2}.
\end{proof} 
\begin{theorem}\label{Peanoresult}
$({\sQ},\nabla_\sQ)$ is a Peano continuum.
\end{theorem}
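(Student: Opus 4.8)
Since Theorem \ref{comcon} already establishes that $(\sQ,\nabla_\sQ)$ is a continuum, and since a continuum is Peano exactly when its set of congestion points is empty (weak local connectedness and local connectedness coinciding by \cite[Lem.\,4.23]{bt}), the plan is to prove $\sN(\sQ) = \emptyset$. By Theorem \ref{P=Q} we have $\sQ = \sP = \sF \cup \sO$, so every point of $\sQ$ is either a component $F \in \sF$ of $\mF$ or a singleton $\{x\}$ with $x \in \mO$. I would argue in two stages: first confine the congestion set to $\sF$, and then derive a contradiction from its nonemptiness by playing the total disconnectedness of $\sF$ against Whyburn's Theorem \ref{whyburn}.

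First I would show that no singleton $\{x\}$ with $x \in \mO$ is a congestion point of $\sQ$, so that $\sN(\sQ) \subseteq \sF$. Because $\mO = M \setminus \ol{\sN(M)}$, every $x \in \mO$ has a neighbourhood missing $\sN(M)$, so $M$ is weakly locally connected at $x$; hence for each $\e>0$ there is a connected neighbourhood $U \ni x$ in $M$ with $\diam U < \e$. Choosing $\e < \d_x = \dist(x,\mF)/3$ forces $U \subset B_{\d_x}(x) \subset \mO$, and there Corollary \ref{Cxy} gives $\nabla_\sQ(\{x_1\},\{x_2\}) = d(x_1,x_2)$, so $\pi_\sQ$ restricts to an isometry. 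Then $\pi_\sQ(U)$ is connected with $\nabla_\sQ$-diameter $<\e$; and since Lemma \ref{L1} together with Corollary \ref{Cxy} yields $\{Q' \in \sQ : \nabla_\sQ(\{x\},Q') < r\} = \pi_\sQ(B_r(x))$ for every $r < \d_x$, any ball $B_\rho(x) \subset U$ has image a $\nabla_\sQ$-ball about $\{x\}$, so $\pi_\sQ(U)$ is in fact a neighbourhood of $\{x\}$. Thus $\sQ$ is weakly locally connected at $\{x\}$, and $\sN(\sQ) \subseteq \sF$.

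The decisive step is then short. By Theorem \ref{Fdiscon} the set $\sF$ is totally disconnected in $(\sQ,\nabla_\sQ)$, so every connected subset of $\sF$ is a singleton; as $\sN(\sQ) \subseteq \sF$, every component of $\sN(\sQ)$ is a singleton. But Whyburn's Theorem \ref{whyburn}, applied to the continuum $\sQ$, asserts that no component of $\sN(\sQ)$ is a singleton. The only way to reconcile these is $\sN(\sQ) = \emptyset$, whence $\sQ$ is weakly locally connected and, being a continuum, locally connected — a Peano continuum.

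I expect the first stage, not the last, to be the main obstacle. A direct verification of weak local connectedness at the points of $\sF$, that is, at the components of $\mF$, would demand a delicate and uniform description of how those components are approached inside the quotient, which seems awkward to carry out. The whole point of the argument above is to avoid that analysis: once the congestion set is pushed into $\sF$, the combination of total disconnectedness with Theorem \ref{whyburn} dispatches every $\sF$-point simultaneously. Within the first stage the one genuinely technical point is confirming that $\pi_\sQ$ carries neighbourhoods of $x$ in $\mO$ to neighbourhoods of $\{x\}$ in $\sQ$, which is precisely what the local ball identity $\{Q' : \nabla_\sQ(\{x\},Q') < r\} = \pi_\sQ(B_r(x))$ supplies.
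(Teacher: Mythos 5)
Your proposal is correct and follows essentially the same route as the paper: establish weak local connectedness at the points of $\sO$ via Corollary \ref{Cxy} and the fact that $\mO$ avoids $\ol{\sN(M)}$, then eliminate congestion points in $\sF$ by combining Theorem \ref{Fdiscon} with Theorem \ref{whyburn}. Your explicit verification that $\pi_\sQ(B_r(x))$ is a $\nabla_\sQ$-ball (hence that $\pi_\sQ(U)$ is genuinely a neighbourhood of $\{x\}$) is a slightly more careful rendering of a step the paper passes over quickly, but the argument is the same.
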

\begin{proof}   
Since a continuum is a Peano continuum   if and only if it has no congestion points, by Theorem\ref{comcon}  it suffices to prove that $({\sQ},\nabla_\sQ)$ is weakly locally connected at every  $Q \in \sQ$. 

For  weak local connectedness of $\sQ$ at $Q = \{x\}\in \sO$,  as in Corollary \ref{Cxy} let $0<\d_x < \dist(x,\mF)/3$. Then since $x\in \mO$ is not a congestion point of $M$, for any $\e \in (0, \d_x)$ there is a  connected neighbourhood,  $W_\e$ say, of $x$ in $(M,d)$ with  $x\in W_\e \subset B_\e(x)$. Then by Corollary \ref{Cxy} $\Delta_\sP(x,y) = d(x,y)$ for all $y \in W_\e$ and it follows that $ \pi_\sQ(W_\e)$  is a  connected neighbourhood of $\{x\}$  in the ball $ B_\e\big(\{x\}\big)$ in $(\sQ,\nabla_\sQ)$. Thus $(\sQ, \nabla_\sQ)$ is weakly locally connected at all points of $\sO$. Hence all the congestion points, if any, of $(\sQ,\nabla_\sQ)$ are in $\sF$.
However, since by Theorem \ref{Fdiscon} $\sF$ is totally disconnected, it follows from  Theorem \ref{whyburn} that no point of $\sF$ is a congestion point of  $(\sQ,\nabla_\sQ)$. Hence $(\sQ,\nabla_\sQ)$ is weakly locally connected and the proof is complete.
\end{proof}
\section{Closing Remarks}\label{closing} 
For any metric space $M$ it follows  from Section \ref{SPM} that  if $\mF=\ol{\sN(M)}$  is compact  and $\sP$, $\sF$ and $\sO$ are defined by \eqref{FO}, $\sP = \sQ$ and  the quotient space $(\sQ,\nabla_Q)$ is a Peano metric space (the paragraph after Definition \ref{loccon}).

Suppose a  closed subset $\mF^*$ of a continuum $M$  contains all the congestion points of $M$. Then if $\mO^* = M \setminus \mF^*$, a partition   $\sP^*=\sO^*\cup\sF^*$ of $M$ is defined  by replacing $\mF$ with $\mF^*$ in \eqref{F&O}. Then  
by the above arguments the analogous quotient metric space $(\sQ^*, \nabla_{\sQ^*})$  is a Peano continuum with $\sP^* = \sQ^*$. Moreover,   $\sF^*$ is compact and  totally disconnected, and the quotient metric $\nabla_{\sQ^*}$ coincides locally with the original metric $d$ in $\mO^*$, in the  sense  of Theorem \ref{L0w}.

An example of a closed set  which contains the congestion points of $M$ is the closure of the set of points where $M$ is not locally connected, Definition \ref{loccon}.

On the other hand if $M = \mO^\dagger \cup \mF^\dagger$ where $\mF^\dagger$ is closed, $\mO^\dagger \cap \mF^\dagger= \emptyset$, but $\mO^\dagger \cap\sN(M) \neq \emptyset$, it follows from Corollary \ref{Cxy} with $\mO^\dagger$ and $\mF^\dagger$ instead of $\mO$ and $\mF$ that 
for every $x \in \mO^\dagger$ there is a neighbourhood $U^\dagger_x \subset \mO^\dagger$ such that
 $d(x_1,x_2) = \nabla_{\sQ^\dagger}  \big(\{x_1\},\{x_2\}\big) \forall x_1,\,x_2 \in U^\dagger_x$.
 
Therefore $\{x\} \in \sO^\dagger$ is a congestion point of $(\sQ^\dagger, \nabla_{\sQ^\dagger})$ if $x \in \sN(M) \cap \mO^\dagger$,  and hence 
$(\sQ^\dagger, \nabla_{\sQ^\dagger})$ is compact and connected, but  not a Peano continuum.  

Therefore the choice  $\sF = \ol{\sN(M)}$ in Section \ref{PQMST} is optional.
 \qed

\small\vspace{-.2in}

        \end{document}